\newtheorem{theorem}{Theorem}[section]
\newtheorem{lemma}[theorem]{Lemma}
\newtheorem{prop}[theorem]{Proposition}
\newtheorem{cor}[theorem]{Corollary}
\theoremstyle{definition}
\newcommand{\Mcg}{\mathrm{Mod}}
\newcommand{\C}{\mathcal{C}}
\newcommand{\Cc}{\mathcal{C}}
\newcommand{\D}{\mathcal{D}}
\newcommand{\cY}{\mathcal{Y}}
\newcommand{\T}{\mathcal{T}}
\newcommand{\Lk}{\mathrm{Lk}}
\numberwithin{equation}{section}
\title[Curve complex of the 3-holed projective plane]
{A note on the curve complex of the 3-holed projective plane}
\author{B{\l}a\.zej Szepietowski}
\email{blaszep@mat.ug.edu.pl}
\address{Institute of Mathematics, Faculty of Mathematics, Physics and Informatics, University of Gda\'nsk, 80-308 Gda\'nsk, Poland} 
\thanks{Supported by grant 2015/17/B/ST1/03235 of  National Science Centre, Poland.}
\begin{document}
\begin{abstract}
Let $S$ be a projective plane with $3$ holes. We prove that there is an exhaustion of the curve complex $\C(S)$ by a sequence of finite rigid sets. As a corollary, we obtain that the group of simplicial automorphisms of $\C(S)$ is isomorphic to the mapping class group $\Mcg(S)$. We also prove that $\C(S)$ is quasi-isometric to a simplicial tree.
\end{abstract}

\maketitle
\section{introduction}
The complex of curves $\Cc(S)$ of a surface $S$, first introduced by Harvey \cite{Harvey}, is
the simplicial complex with $k$-simplices representing collections
of homotopy classes of $k+1$ non-isotopic disjoint simple closed
curves in $S$. 
In this paper we let $S=N_{1,3}$ be the $3$-holed projective plane. Then $\C(S)$ is one-dimensional and its combinatorial structure was described by Scharlemann \cite{Sch}. 
The first purpose of this note is to prove some rigidity results about $\C(N_{1,3})$, which are known for most surfaces, but have not been proved in the literature in this particular case. The second purpose is to show that $\C(N_{1,3})$ is quasi-isometric to a simplicial tree.

By the celebrated theorem of Ivanov \cite{Ivanov-Aut}, Korkmaz \cite{Kork-CC} and Luo \cite{Luo}, the group $\mathrm{Aut}(\Cc(S))$ of simplicial
  automorphisms of $\Cc(S)$ for orientable surface $S$ is, with a few well understood exceptions, isomorphic to the extended mapping class group $\Mcg^\pm(S)$.
A stronger version of this result, due to Shackleton \cite {Shack}, says that every locally injective simplicial map from $\Cc(S)$ to itself is induced by some element of $\Mcg^\pm(S)$ (simplicial map is locally injective
if its restriction to the star of every vertex is injective).  Analogous results for 
nonorientable surfaces were proved by Atalan-Korkmaz \cite{AK} and Irmak \cite{Irmak14}, omitting the case of $N_{1,3}$.

Aramayona and Laininger introduced in \cite{AL1} the notion of a {\it rigid set}.
It is a subcomplex $X\subset\Cc(S)$, with the property that every
locally injective simplicial map $X\to\Cc(S)$ is induced by some
homeomorphism of $S$.
They constructed in \cite{AL1} a finite rigid
set in $\Cc(S)$, for every orientable surface $S$, and in 
\cite{AL2} they proved that there is an exhaustion of $\Cc(S)$ by a sequence of finite rigid sets.

Let $S=N_{g,n}$ be a nonorientable surface of genus $g$ with $n$ holes.
Ilbira and Korkmaz \cite{IK} constructed a finite rigid set in $\Cc(S)$ for
$g+n\ne 4$. Irmak \cite{Irmak19} proved that $\C(S)$ can be exhausted by a sequence of finite rigid sets for $g+n\ge 5$ or $(g,n)=(3,0)$.
In this paper we show that the main results of Ilbira-Korkmaz \cite{IK} and 
Irmak \cite{Irmak19} are true also for $N_{1,3}$.  
\begin{theorem}\label{T:main}
There exists a sequence 
$\cY_1\subset\cY_2\subset\cdots\subset\C(N_{1,3})$ such that:
\begin{itemize}
\item[(1)] $\cY_i$ is a finite rigid set for all $i\ge 1$;
\item[(2)] $\cY_i$ has trivial pointwise stabilizer in $\Mcg(N_{1,3})$ for all $i\ge 1$;
\item[(3)] $\bigcup_{i\ge 1}\cY_i=\C(N_{1,3})$.
\end{itemize}
\end{theorem}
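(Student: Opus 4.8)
The plan is to follow the strategy of Aramayona--Leininger \cite{AL2} and Irmak \cite{Irmak19}, adapted to the special combinatorics of $N_{1,3}$ coming from Scharlemann's description \cite{Sch}. The first step is to record the topological types of vertices of $\C(N_{1,3})$. Since $N_{1,3}$ carries a single crosscap and $\chi(N_{1,3})=-2$, an Euler-characteristic count shows that every two-sided essential curve is separating, and that each essential curve has exactly one of a few types: a one-sided curve $\mu$ (whose regular neighborhood is a M\"obius band with boundary $\hat\mu$), or a two-sided curve bounding a pair of pants together with two of the three boundary components. I would first prove that each type is recognizable from purely combinatorial data in $\C(N_{1,3})$ --- for instance from the isomorphism type of the link of a vertex, from valences, and from the fact that two one-sided curves can never be disjoint, as this would require a second crosscap. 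This type-recognition replaces the genus and boundary invariants available on larger surfaces and is the crucial input for rigidity.

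Next I would construct an explicit finite base set $\cY_1$. Because $N_{1,3}$ has $g+n=4$, it is exactly the case excluded by Ilbira--Korkmaz \cite{IK}, so the generic construction is unavailable and a bespoke configuration is needed; I would take a small, symmetric family containing representatives of each topological type whose mutual disjointness pattern is rigid enough to pin down a homeomorphism. To show that $\cY_1$ is rigid, I would take a locally injective simplicial map $\phi\colon\cY_1\to\C(N_{1,3})$, use the combinatorial type-recognition above to prove that $\phi$ preserves the topological type of every vertex and carries the distinguished configuration to one of the same combinatorial shape, and then invoke the Alexander method to produce a homeomorphism $h$ of $N_{1,3}$ inducing $\phi$ on $\cY_1$. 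In parallel I would verify condition (2) for $\cY_1$: a mapping class in $\Mcg(N_{1,3})$ fixing every curve of $\cY_1$ must be trivial, which requires checking that the configuration detects not only Dehn twists but also the crosscap slides ($Y$-homeomorphisms) special to the nonorientable setting.

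Finally, I would assemble the exhausting sequence. Enumerating the vertices of $\C(N_{1,3})$, I would set $\cY_{i+1}=\cY_i\cup Z_{i+1}$, where $Z_{i+1}$ is a finite rigid piece (for example a mapping-class-group translate of the base configuration, or the star of a new vertex) chosen so that $\bigcup_i\cY_i=\C(N_{1,3})$. The inductive step is a gluing lemma: if $\cY_i$ and $Z_{i+1}$ are rigid and their intersection has trivial pointwise stabilizer, then a locally injective simplicial map restricts on the two pieces to maps induced by homeomorphisms $h_1,h_2$; agreeing on the overlap, $h_2^{-1}h_1$ fixes every curve there, so by condition (2) it is trivial and $h_1=h_2$, whence $\cY_{i+1}$ is again rigid and (2) is inherited. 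Thus conditions (1), (2) and (3) reinforce one another, with (2) being precisely the hypothesis that makes the gluing well defined.

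The step I expect to be the main obstacle is proving rigidity of the base set $\cY_1$. The difficulty is structural: since $\C(N_{1,3})$ is combinatorially thin (indeed quasi-isometric to a tree), it admits many simplicial automorphisms that are not geometric, so adjacency alone carries little information. Recovering the topological type of each curve --- and in particular controlling the one-sided curves, which have no counterpart in the orientable theory --- from the meagre local combinatorics is delicate, and this is exactly the phenomenon that makes $g+n=4$ the exceptional case omitted in \cite{IK, Irmak19}. Once type-recognition and the trivial-stabilizer computation are in hand, the remaining steps are routine adaptations of \cite{AL2, Irmak19}.
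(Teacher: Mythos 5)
Your outline follows the generic Aramayona--Leininger template, but the inductive gluing step contains a genuine gap when specialized to $N_{1,3}$, and it is precisely the point where the paper has to do something different. Your gluing lemma requires the overlap $\cY_i\cap Z_{i+1}$ to have trivial pointwise stabilizer in $\Mcg(N_{1,3})$. In this surface the natural finite rigid pieces are the sets $T^\ast$ spanned by the four one-sided curves of a ``tetrahedron'' $T$ (four one-sided curves pairwise intersecting once) together with the six two-sided curves they determine; these tetrahedra are the vertices of a $4$-regular tree, and a new translate of the base configuration meets the previously built set in at most a common triangle $\{\alpha_1,\alpha_2,\alpha_3\}$ plus its three determined two-sided curves. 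That overlap does \emph{not} have trivial pointwise stabilizer: each triangle lies in exactly two tetrahedra $\{\alpha_0,\alpha_1,\alpha_2,\alpha_3\}$ and $\{\alpha'_0,\alpha_1,\alpha_2,\alpha_3\}$, and by the existence-and-uniqueness statement of Lemma \ref{L:Erigid} there is an involution in $\Mcg(N_{1,3})$ fixing $\alpha_1,\alpha_2,\alpha_3$ and swapping $\alpha_0$ with $\alpha'_0$. So $h_2^{-1}h_1$ need not be trivial, your argument cannot conclude $h_1=h_2$, and rigidity of the two pieces plus agreement on the overlap genuinely leaves a two-fold ambiguity at every gluing. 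Nor can you enlarge the overlap to a full tetrahedron without making $Z_{i+1}$ a union of at least two tetrahedra, which is the same problem one level down.

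The paper resolves the ambiguity not by a stabilizer hypothesis on the overlap but by using local injectivity of $\phi$ \emph{at a vertex of the overlap}: the two-sided curves $\beta\in T^\ast$ and $\beta'\in (T')^\ast$ determined by $\{\alpha_0,\alpha_1\}$ and $\{\alpha'_0,\alpha_1\}$ both lie in the star of $\alpha_1$, so $\phi(\beta)\ne\phi(\beta')$, which forces $\phi(\alpha_0)\ne\alpha'_0$ and hence $\phi(T)=T$, since a triangle lies in exactly two tetrahedra. Your proposal applies local injectivity only on each piece separately, never across the seam, so this is a missing idea rather than a detail. Your remaining steps are essentially consistent with the paper: two-sided vertices are recognized as the degree-$2$ vertices of the bipartite graph $\C(N_{1,3})$, the base set is a single $T^\ast$, and its rigidity reduces, via the link of a one-sided vertex $\alpha_0$, to rigidity of Farey triangles in the four-holed sphere $S\backslash\alpha_0$ rather than to a direct Alexander-method argument.
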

Our proof is 
independent from \cite{IK, Irmak19}.
The following corollary is an extension of the main results of Atalan-Korkmaz \cite{AK} and Irmak \cite{Irmak14}. It follows easily from Theorem \ref{T:main} (see the proof of the analogous corollary in \cite{AL2}).
\begin{cor}\label{C:main}
If $\phi\colon\C(N_{1,3})\to\C(N_{1,3})$ is a locally injective simplicial map, then there exists a unique $f\in\Mcg(N_{1,3})$ such that $\phi=f$.
\end{cor}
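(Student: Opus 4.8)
The plan is to derive the corollary from Theorem~\ref{T:main} by the exhaustion argument of Aramayona--Leininger \cite{AL2}. Let $\phi\colon\C(N_{1,3})\to\C(N_{1,3})$ be locally injective and simplicial. For each $i$ I would restrict $\phi$ to the rigid set $\cY_i$. The restriction $\phi|_{\cY_i}\colon\cY_i\to\C(N_{1,3})$ is still simplicial, and it is still locally injective because the star of a vertex in the subcomplex $\cY_i$ is contained in its star in $\C(N_{1,3})$. Since $\cY_i$ is rigid by part~(1), there is a homeomorphism of $N_{1,3}$, with class $f_i\in\Mcg(N_{1,3})$, such that $\phi(v)=f_i(v)$ for every vertex $v$ of $\cY_i$.

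The essential point is to patch these locally defined classes into a single one, and this is exactly where part~(2) is used. Trivial pointwise stabilizer means that $f_i$ is the \emph{unique} mapping class agreeing with $\phi$ on $\cY_i$: if another class $g$ satisfied $g(v)=\phi(v)$ for all $v\in\cY_i$, then $g f_i^{-1}$ would fix $\cY_i$ pointwise and hence be trivial. Applying this to the inclusion $\cY_i\subset\cY_{i+1}$, I observe that $f_{i+1}$ agrees with $\phi$, hence with $f_i$, on $\cY_i$; uniqueness then gives $f_{i+1}=f_i$. Thus all the $f_i$ coincide with a single $f\in\Mcg(N_{1,3})$.

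It remains to pass from the rigid sets to the whole complex, which is where part~(3) enters. Every vertex $v\in\C(N_{1,3})$ lies in some $\cY_i$ by the exhaustion property, so $\phi(v)=f_i(v)=f(v)$; since $\phi$ and $f$ are simplicial and agree on all vertices, $\phi=f$. Uniqueness of $f$ follows once more from part~(2): any two mapping classes inducing $\phi$ agree on $\cY_1$ and therefore are equal. I do not anticipate a real obstacle, as all the difficulty is absorbed into Theorem~\ref{T:main}; the one step demanding attention is the compatibility of the $f_i$, where the trivial-stabilizer hypothesis~(2), not merely rigidity~(1), is what guarantees that the pieces fit together into a globally defined mapping class.
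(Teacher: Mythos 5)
Your proposal is correct and is exactly the argument the paper intends: the paper gives no explicit proof, instead citing the analogous corollary in \cite{AL2}, whose proof is precisely your exhaustion-and-patching argument using rigidity for existence of the $f_i$ and the trivial pointwise stabilizers for their compatibility and for uniqueness.
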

In particular, the group of simplicial automorphisms of $\C(N_{1,3})$ is isomorphic to $\Mcg(N_{1,3})$. 

%
\section{Preliminaries.}
Let $S$ be a surface of finite type. By a {\it hole} in a surface we mean a boundary component. 
A {\it curve} on $S$ is an embedded simple closed curve.  A curve is one-sided (resp. two-sided) if its regular neighbourhood is a M\"obius band (resp. an annulus).
If $\alpha$ is a curve on $S$, then $S\backslash\alpha$ is the subsurface obtained by removing from $S$ an open regular neighbourhood of $\alpha$.
A curve $\alpha$ is {\it essential} if no boundary component of $S\backslash\alpha$ is a disc or an annulus or a M\"obius band. 

The {\it curve complex} $\C(S)$ is a simplicial complex whose $k$-simplices correspond to sets of $k+1$ isotopy classes of essential curves on $S$ with pairwise disjoint representatives. To simplify the notation, we will confuse a curve with its isotopy class and the corresponding vertex of $\C(S)$. Simplices of dimension $1$, $2$ and $3$ will be called {\it edges}, {\it triangles} and {\it tetrahedra} respectively. 

For $\alpha,\beta\in\C^0(S)$ we denote by $i(\alpha,\beta)$ their geometric intersection number.

The {\it mapping class group} $\Mcg(S)$ of a nonorientable surface $S$ (resp. the {\it extendend mapping class group} $\Mcg^\pm(S)$ of an orientable surface $S$) is the group of isotopy classes of all self-homeomorphisms of $S$. 
If $S$ is orientable, then the mapping class group $\Mcg(S)$ is  defined to be the group if isotopy classes of orientation preserving homeomorphisms. Note that $\Mcg(S)$ and $\Mcg^\pm(S)$ act on $\C(S)$ by simplicial automorphisms.

If $S$ is a four-holed sphere, then $\C(S)$ is a countable set of vertices. In order to obtain a connected complex, the definition of $\C(S)$ is modified by declaring $\alpha,\beta\in\C^0(S)$ to be adjacent in $\C(S)$ whenever $i(\alpha,\beta)=2$. Furthermore, triangles are added to make $\C(S)$ into a flag complex. The complex $\C(S)$ so obtained is isomorphic to the well-known {\it Farey complex}. Two adjacent vertices of $\C(S)$ will be called {\it Farey neighbours}, and   $2$-simplices of $\C(S)$ will be called {\it Farey triangles}. If $C$ is a boundary component, then we denote by $\Mcg(S,C)$ (resp. $\Mcg^\pm(S,C)$) the subgroup of $\Mcg(S)$ (resp. $\Mcg^\pm(S)$) consisting of elements fixing $C$.
\begin{lemma}\label{L:Farey_rigid}
Suppose that $S$ is a $4$-holed sphere, and $C$ is one of the boundary components.
For any two Farey triangles $\{\beta_0,\beta_1,\beta_2\}$ and $\{\beta'_0,\beta'_1,\beta'_2\}$ in $\C(S)$ there exists a unique $f\in\Mcg^\pm(S,C)$ such that $f(\beta_i)=\beta'_i$ for $i=0,1,2$.
\end{lemma}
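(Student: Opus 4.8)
The plan is to exploit the classical identification of $\C(S)$ with the Farey complex, already recalled above, together with the well-understood action of $\Mcg^\pm(S)$ on it; this reduces the lemma to the simple transitivity of $\mathrm{Aut}(\C(S))$ on ordered triangles, plus a bookkeeping argument for the distinguished boundary component $C$.

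First I would use the slope parametrization of the vertices of $\C(S)$ by $\mathbb{P}^1(\mathbb{Q}) = \mathbb{Q} \cup \{\infty\}$, under which two curves are Farey neighbours exactly when the corresponding fractions $p/q$ and $r/s$ satisfy $|ps - qr| = 1$, and the Farey triangles are the ideal triangles of the Farey tessellation of $\mathbb{H}^2$. With this identification $\mathrm{Aut}(\C(S)) \cong \mathrm{PGL}_2(\Z)$, and I would record the standard fact that this group acts \emph{simply transitively on ordered Farey triangles}: it is transitive on triangles, and the stabilizer of a triangle is the full symmetric group on its three vertices, realized by the order-$3$ rotation coming from $\mathrm{PSL}_2(\Z)$ together with the reflections coming from the orientation-reversing part. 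Hence there is a \emph{unique} simplicial automorphism $\phi$ of $\C(S)$ with $\phi(\beta_i) = \beta_i'$ for $i = 0,1,2$, and the content of the lemma becomes: $\phi$ is induced by exactly one element of $\Mcg^\pm(S,C)$.

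To realize $\phi$ by a homeomorphism and to control the boundary, I would analyse the kernel of the action $\Mcg^\pm(S) \to \mathrm{Aut}(\C(S))$. Viewing $S$ as the quotient of a torus by the elliptic involution, whose four fixed points become the four holes, one sees that $\Mcg^\pm(S)$ surjects onto $\mathrm{PGL}_2(\Z)$ with kernel the Klein four-group $K \cong (\Z/2)^2$ arising from the order-two translations of the torus; each nontrivial element of $K$ acts trivially on every isotopy class of curve but permutes the four holes as a fixed-point-free product of two transpositions. (Here I would also note that boundary Dehn twists are trivial in $\Mcg^\pm(S)$ because isotopies need not fix $\partial S$ pointwise, so no twisting ambiguity survives.) The crucial point is that $K$ thus acts on the set of four holes as the \emph{regular} representation of $(\Z/2)^2$. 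Given any $f_0 \in \Mcg^\pm(S)$ inducing $\phi$, its lifts are exactly the coset $f_0 K$, and regularity guarantees that precisely one element of $f_0 K$ sends $C$ to $C$, yielding existence; for uniqueness I would observe that $K \cap \Mcg^\pm(S,C) = 1$, since no nontrivial element of $K$ fixes a hole, so the restriction $\Mcg^\pm(S,C) \to \mathrm{Aut}(\C(S))$ is an isomorphism and the required $f$ is unique.

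The main obstacle I anticipate is not the simple-transitivity statement, which is classical, but the careful handling of $C$ through the kernel $K$: specifically, establishing that $K$ acts on the four holes as the regular representation of $(\Z/2)^2$, as this is precisely what forces each simplicial automorphism to lift to a \emph{unique} mapping class fixing $C$. The secondary point needing care is confirming that orientation-reversing homeomorphisms are what promote $\mathrm{PSL}_2(\Z)$ to the full $\mathrm{PGL}_2(\Z)$, so that all six orderings of a triangle (not merely the three cyclic ones) are attained.
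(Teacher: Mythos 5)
Your proof is correct, but it takes a genuinely different route from the paper's. The paper argues directly with homeomorphisms: cutting $S$ along the two Farey neighbours $\beta_1,\beta_2$ yields four annuli, one per boundary component, so the change-of-coordinates principle produces an orientation-preserving $f'\in\Mcg(S,C)$ with $f'(\beta_i)=\beta_i'$ for $i=1,2$; the Alexander method gives uniqueness of such an $f'$; and the remaining vertex $\beta_0$ is adjusted, if necessary, by an explicit orientation-reversing involution $\tau$ fixing every hole and swapping the two common Farey neighbours of $\beta_1'$ and $\beta_2'$. You instead work on the algebraic side: simple transitivity of $\mathrm{Aut}(\C(S))\cong\mathrm{PGL}_2(\Z)$ on ordered Farey triangles pins down the simplicial map, and the structure $\Mcg^\pm(S)\cong\mathrm{PGL}_2(\Z)\ltimes(\Z/2)^2$ coming from the hyperelliptic torus cover, with the Klein four-group kernel acting regularly on the four holes, selects the unique lift fixing $C$. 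Both arguments are sound, and your two anticipated obstacles (the regular action of $K$ on the holes, and the fact that determinant $-1$ corresponds to orientation reversal) are exactly the right points to verify; both hold. The trade-off: the paper's version is more elementary and self-contained, using only tools quotable from Farb--Margalit, and it keeps track of $C$ from the outset; yours makes existence and uniqueness conceptually transparent in one stroke, but at the price of importing the full computation of the mapping class group of the four-holed sphere and of $\mathrm{Aut}$ of the Farey complex, together with the (correctly flagged) bookkeeping needed to pass from the punctured, orientation-preserving setting in which that computation is usually stated to $\Mcg^\pm$ of the holed sphere.
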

\begin{proof} 
By cutting $S$ along Farey neighbours we obtain four annuli, each containing one boundary component of $S$. Therefore there exists an orientation preserving $f'\in\Mcg(S,C)$ such that $f'(\beta_i)=\beta'_i$ for $i=1,2$. Furthermore, since $f'(C)=C$, such $f'$ is easily shown to be unique by the Alaxander method \cite[Prop. 2.8]{FM}.
 The pointwise stabilizer of $\{\beta'_1,\beta'_2\}$ in $\Mcg^{\pm}(S,C)$ is a cyclic group of order $2$ generated by an orientation reversing involution $\tau$ fixing every hole and such that $\beta'_0$ and $\tau(\beta'_0)$ are the unique common Farey neighbours of both $\beta'_1$ and $\beta'_2$. By composing $f'$ with $\tau$ if necessary we obtain the desired $f$.
\end{proof}


We represent the surface $N_{1,n}$ as a sphere with one crosscap and $n$ holes.
The following two lemmas are easy to prove, and otherwise, they can be found in \cite{Sch}.
\begin{lemma}\label{Lem:2holedRP2}
$\C(N_{1,2})$ consists of two one-sided vertices $\alpha,\alpha'$ such that $i(\alpha,\alpha')=1$ (Figure \ref{Fig:N123}). 
\end{lemma}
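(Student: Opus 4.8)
The plan is to classify all essential curves on $N_{1,2}$ directly, and the first step is to show that every essential vertex is one-sided. Let $\beta$ be a two-sided curve and cut along it. Since $\chi(N_{1,2})=-1$, a short Euler characteristic count disposes of the non-separating case: cutting a two-sided non-separating curve would produce a connected surface with four boundary circles and Euler characteristic $-1$, and no surface, orientable or non-orientable, has these invariants. If $\beta$ separates, then comparing Euler characteristics (using that the unique crosscap lies on one side) shows that, however the two holes are distributed between the two complementary pieces, one piece is always a disc, an annulus, or a Möbius band; hence $\beta$ bounds a disc or a Möbius band, or is boundary-parallel, and in every case is inessential. Thus every vertex of $\C(N_{1,2})$ is one-sided. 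Moreover two distinct one-sided curves can never be made disjoint, since cutting along one of them yields an orientable surface in which the other would have to be two-sided. Therefore $\C(N_{1,2})$ has no edges: it is a discrete set of one-sided vertices, and it remains only to count them.

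Next I would fix the core $\alpha$ of the crosscap and cut $N_{1,2}$ along it. The complement is orientable with $\chi=-1$ and three boundary circles, hence a pair of pants $P$ with cuffs $c_1,c_2$ (the two holes of $N_{1,2}$) and $d=\bdr{N(\alpha)}$; conversely $N_{1,2}$ is recovered from $P$ by the antipodal identification on $d$. Given another one-sided curve $\delta$ placed in minimal position with $\alpha$ and meeting it $n$ times, $\delta$ appears in $P$ as $n$ disjoint arcs whose $2n$ endpoints lie on $d$ and are matched in antipodal pairs by the crosscap. For $n=1$ this is a single arc from $d$ to itself. A pair of pants has exactly one isotopy class of essential arc from a fixed cuff to itself, namely the one separating the other two cuffs, and the only other arc from $d$ to itself is boundary-parallel. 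The essential arc closes up to a one-sided curve $\alpha'$ with $i(\alpha,\alpha')=1$, while the boundary-parallel arc closes up to $\alpha$ itself. This produces exactly the two vertices in the statement.

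The main obstacle is to rule out one-sided curves meeting $\alpha$ more than once, that is, to prove $i(\alpha,\delta)\le 1$ for every one-sided $\delta$. I expect to handle this by an innermost-arc/bigon argument in $P$: for $n\ge 2$ the $n$ disjoint essential arcs from $d$ to itself must all be parallel, there being a single isotopy class, and I would argue that the antipodal matching of their endpoints, together with the requirement that $\delta$ be a single embedded curve, forces an excess intersection that can be removed, contradicting minimality. Once $i(\alpha,\delta)\le 1$ is established, the case $i(\alpha,\delta)=0$ forces $\delta=\alpha$ (a one-sided curve disjoint from $\alpha$ would lie in the orientable $P$) and the case $i(\alpha,\delta)=1$ forces $\delta=\alpha'$, so $\C(N_{1,2})=\{\alpha,\alpha'\}$ with $i(\alpha,\alpha')=1$. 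Alternatively, this count is contained in Scharlemann's description in \cite{Sch}.
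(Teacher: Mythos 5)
The paper itself offers no argument for this lemma beyond the remark that it is ``easy to prove'' and the citation of Scharlemann \cite{Sch}, so you are really supplying a proof from scratch. Most of what you write is correct and complete: the Euler characteristic count ruling out essential two-sided curves (both the non-separating case and the three distributions of the two holes in the separating case) is right; so is the observation that the complement of any one-sided curve in $N_{1,2}$ is an orientable pair of pants, hence that distinct one-sided vertices cannot be disjoint and that a one-sided $\delta$ with $i(\alpha,\delta)=0$ must equal $\alpha$; and the identification of the two arc classes from $d$ to itself in $P$ and of the curves they close up to is correct.

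The genuine gap is the step you yourself flag: ruling out $i(\alpha,\delta)\ge 2$. As written this is only a plan (``I expect to handle this by\dots'', ``I would argue that\dots''), and the mechanism you propose --- that the antipodal matching ``forces an excess intersection that can be removed, contradicting minimality'' --- is not what actually happens. First, you must justify that all $n$ arcs are essential: an innermost boundary-parallel arc cuts off an honest bigon with $d=\partial N(\alpha)$, contradicting minimality of $|\delta\cap d|=2n$; that part does fit your bigon plan. But once the $n$ arcs are all parallel essential arcs, there are no bigons left to remove. Writing their $2n$ endpoints in cyclic order on $d$ as $p_1,\dots,p_n,q_n,\dots,q_1$ (arc $a_i$ joining $p_i$ to $q_i$), the antipodal involution is forced to act as the shift by $n$, so it glues $q_i$ to $p_{n+1-i}$; the resulting $1$-manifold is connected only for $n\le 2$, so for $n\ge 3$ the contradiction is that $\delta$ would be \emph{disconnected}, not that an intersection can be removed. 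And for $n=2$ the glued-up curve is connected, embedded, and genuinely realizes $i(\alpha,\delta)=2$ --- there is no removable intersection at all; it is excluded only because it crosses the crosscap an even number of times and is therefore two-sided, contrary to the hypothesis on $\delta$ (equivalently, because $i(\alpha,\delta)$ is odd for one-sided $\delta$ by the $\Z/2$-intersection pairing). So the argument is completable along the lines you indicate, but the contradiction you would actually reach is disconnectedness or two-sidedness, and as submitted the step is not carried out. Falling back on \cite{Sch}, as you note, is exactly what the paper does.
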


\begin{lemma}\label{Lem:3holedRP2}
In $\C(N_{1,3})$ every two-sided vertex $\beta$ is connected by an edge with exactly two vertices $\alpha, \alpha'$ which are one-sided and $i(\alpha,\alpha')=1$. Conversely, for every pair of one-sided vertices $\alpha, \alpha'$ such that $i(\alpha,\alpha')=1$, there exists exactly one two-sided vertex $\beta$ connected by an edge with $\alpha$ and $\alpha'$
(Figure \ref{Fig:N123}).
\end{lemma}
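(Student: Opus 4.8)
The plan is to classify the two-sided essential curves on $N_{1,3}$ once and for all by an Euler characteristic count, and then read off both halves of the statement from Lemma~\ref{Lem:2holedRP2}. The structural step I would establish first is that every two-sided essential curve $\beta$ separates $N_{1,3}$ into a copy of $N_{1,2}$, carrying the crosscap and one hole, and a pair of pants carrying the other two holes together with $\beta$. Cutting along a curve preserves $\chi=-2$, so if $\beta$ were non-separating the cut surface would be connected with five boundary components and $\chi=-2$; no surface, orientable or not, has these invariants, so $\beta$ separates. Writing the two sides as $N_{1,m}$ and $S_{0,p}$ and placing the unique crosscap on the first, the boundary count gives $m+p=5$, while essentiality (no side a disc, annulus, or M\"obius band) forces $m\ge 2$ and $p\ge 3$; hence $m=2$, $p=3$, as claimed.

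For the forward implication I would use that a pair of pants carries no essential curve, so every vertex adjacent to $\beta$ is isotopic into the $N_{1,2}$ side. By Lemma~\ref{Lem:2holedRP2} that subsurface contains exactly two essential curves, both one-sided and meeting once; these are the required $\alpha,\alpha'$, and $\beta$ has no further one-sided neighbour.

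The converse is where the genuine work lies. Given one-sided $\alpha,\alpha'$ with $i(\alpha,\alpha')=1$, I would analyse a regular neighbourhood $R$ of $\alpha\cup\alpha'$. The graph $\alpha\cup\alpha'$ has one vertex and two loops, so $\chi(R)=-1$; as $N_{1,3}$ has a single crosscap, $R$ is nonorientable of genus one, forcing $R\cong N_{1,2}$ with two boundary curves $c_1,c_2$. The complement is orientable with $\chi=-1$ and inherits the three holes; a count shows it is disconnected, one component bounded by each $c_i$. The split of the three holes cannot be $(3,0)$, for a hole-free component would be a disc, letting one cap $R$ into a M\"obius band containing both $\alpha$ and $\alpha'$, which is impossible. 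Thus the split is $(2,1)$: one $c_i$ bounds a pair of pants and is an essential two-sided curve of the type above, while the other bounds an annulus and is boundary-parallel. Finally, any two-sided essential $\beta$ disjoint from $\alpha\cup\alpha'$ isotopes off $R$ into the complementary pair of pants and annulus, whose only essential curves are boundary-parallel, so $\beta$ is isotopic to $c_1$ or $c_2$; exactly one of these is essential, giving the unique $\beta$.

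The step I expect to be the main obstacle, and would write most carefully, is this neighbourhood analysis: pinning down $R\cong N_{1,2}$, ruling out the degenerate $(3,0)$ distribution of the holes, and confirming that $c_1,c_2$ exhaust the candidates for $\beta$.
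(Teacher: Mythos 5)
The paper does not actually prove this lemma: it declares both Lemma~\ref{Lem:2holedRP2} and Lemma~\ref{Lem:3holedRP2} ``easy to prove'' and otherwise defers to Scharlemann \cite{Sch}. Your argument therefore cannot match the paper's proof, but it is a correct and complete substitute, and all the steps check out. The Euler characteristic bookkeeping is right ($\chi(N_{1,3})=-2$, so a connected cut surface with five boundary circles and $\chi=-2$ cannot exist, forcing every two-sided essential curve to separate into $N_{1,2}\cup_\beta S_{0,3}$), and the forward direction then follows from Lemma~\ref{Lem:2holedRP2} as you say, modulo the routine observation that the two one-sided curves of the $N_{1,2}$ side remain essential and non-isotopic in $N_{1,3}$. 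In the converse, the two points you flag as delicate are indeed the ones to write out: that $R\cong N_{1,2}$ rather than $N_{2,1}$ is best justified exactly as you indicate (an embedded $N_{2,1}$ would supply two disjoint one-sided curves, impossible in genus one since cutting along one one-sided curve leaves the orientable $S_{0,4}$; alternatively the mod-$2$ intersection form $\bigl(\begin{smallmatrix}1&1\\1&1\end{smallmatrix}\bigr)$ has rank one, which pins down $N_{1,2}$); and the exclusion of the $(3,0)$ hole distribution should be phrased as: capping $c_i$ with the disc would place $\alpha$ and $\alpha'$ in a M\"obius band, where both are isotopic to the core and hence to each other, contradicting that they are distinct vertices. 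With those two justifications spelled out, the disconnectedness count for the complement and the final localization of any common neighbour $\beta$ into the annulus-plus-pair-of-pants complement are exactly as you describe, and uniqueness of $\beta$ follows. In short: correct, and more self-contained than the paper, which buys independence from \cite{Sch} at the cost of a page of elementary surface topology.
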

\begin{figure}[hbt]
 \begin{center}
 \includegraphics{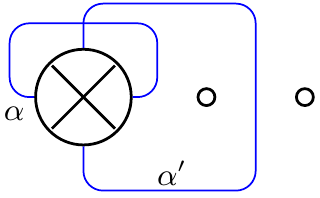} \hskip 1cm \includegraphics{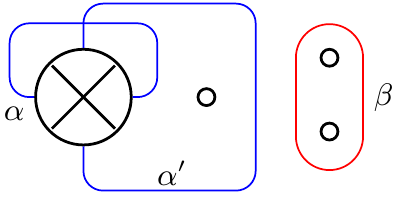}
\caption {Vertices of $\C(N_{1,2})$ (left) and $\C(N_{1,3})$ (right).}\label{Fig:N123}
\end{center}
\end{figure}


\section{Finite rigid sets}
In this section $S$ denotes the three-holed projective plane. The complex $\C(S)$ was studied by Scharlemann \cite{Sch}. It is a bipartite graph: its vertex set can be partitioned as
$\C^0(S)=V_1\sqcup V_2$, where $V_1$ and $V_2$ denote the sets of one-sided and two-sided vertices respectively, and every edge of $\C(S)$ connects a one-sided vertex with a two-sided one. Furthermore, by Lemma \ref{Lem:3holedRP2}, every $\beta\in V_2$ is connected by an edge with exactly two $\alpha,\alpha'\in V_1$ such that $i(\alpha,\alpha')=1$. 
We say that $\beta$ is {\it determined} by $\alpha$ and $\alpha'$.

We define an auxiliary  simplicial complex $\D$ whose vertex set is $V_1$, and a set of vertices $\{\alpha_0,\dots,\alpha_k\}$ is a simplex if 
$i(a_i,a_j)=1$ for $0\le i< j\le k$. It follows from the above discussion that $\C(S)$ is isomorphic to the graph obtained by subdividing every edge of $\D^1$ --  the $1$-skeleton of $\D$. Indeed, the subdivision of an edge of $\D^1$ corresponds to adding the two-sided vertex determined by this edge.  

\begin{prop}\label{Prop:D}
\begin{itemize}
\item[(a)] The link of each vertex of $\D$  is isomorphic to the Farey complex.
\item[(b)] $\dim\D=3$
\item[(c)]  every triangle of $\D$ is contained in exactly two different tetrahedra.
\end{itemize}
\end{prop}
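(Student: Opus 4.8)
The plan is to prove part (a) first and then to deduce (b) and (c) as formal consequences of the structure of the Farey complex. Throughout, fix a one-sided vertex $\alpha$, and recall that cutting $S=N_{1,3}$ along $\alpha$ removes a M\"obius band $M$ and yields a four-holed sphere $\Sigma:=S\backslash\alpha$ whose new boundary component $C=\partial M$ double-covers $\alpha$. By the modified definition in Section 2, $\C(\Sigma)$ is the Farey complex, so the goal of (a) is to produce a simplicial isomorphism $\Lk_\D(\alpha)\cong\C(\Sigma)$.

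For the vertex map I would use Lemma \ref{Lem:3holedRP2}: to each one-sided $\alpha'$ with $i(\alpha,\alpha')=1$ it assigns the unique two-sided curve $\beta_{\alpha,\alpha'}$ determined by the pair $\{\alpha,\alpha'\}$, and I set $\Phi(\alpha')=\beta_{\alpha,\alpha'}$. Being disjoint from $\alpha$, the curve $\beta_{\alpha,\alpha'}$ descends to a curve in $\Sigma$, essential there because a curve bounding a M\"obius band together with $\alpha$ in $S$ would be inessential. Conversely, every essential curve $\delta$ of $\Sigma$ is two-sided, essential and disjoint from $\alpha$ in $S$; by Lemma \ref{Lem:3holedRP2} its only one-sided neighbours are the two curves determining it, so one of them is $\alpha$ and the other is the $\Phi$-preimage of $\delta$. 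Hence $\Phi$ is a bijection from the vertex set of $\Lk_\D(\alpha)$ onto the vertex set of $\C(\Sigma)$. Since $\D$ is a flag complex (a set of one-sided curves spans a simplex as soon as they pairwise intersect once), its link $\Lk_\D(\alpha)$ is flag, and $\C(\Sigma)$ is flag by construction; therefore it suffices to show that $\Phi$ is an isomorphism of $1$-skeleta.

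The heart of the matter is the adjacency translation: for one-sided $\alpha',\alpha''$ meeting $\alpha$ once I must show that $i(\alpha',\alpha'')=1$ if and only if $\Phi(\alpha')$ and $\Phi(\alpha'')$ are Farey neighbours, i.e. $i_\Sigma(\beta_{\alpha,\alpha'},\beta_{\alpha,\alpha''})=2$. In $\Sigma$ each $\alpha'$ becomes an arc $a'$ with endpoints on $C$ (the two points where $\alpha'$ meets $\partial M$), and $\beta_{\alpha,\alpha'}$ is the boundary of a regular neighbourhood of $C\cup a'$; thus $i(\alpha',\alpha'')$ splits as the intersection of $a'$ with $a''$ in $\Sigma$ plus the intersection of the two core-crossing arcs inside $M$. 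I expect the bookkeeping here to be the main obstacle: keeping $\alpha',\alpha''$ in minimal position while separating the contribution coming from $M$ from that in $\Sigma$, and matching it against the doubling $a'\rightsquigarrow\partial N(C\cup a')$. To keep the case analysis finite I would exploit the symmetry of Lemma \ref{L:Farey_rigid}: mapping classes of $N_{1,3}$ fixing $\alpha$ restrict to $\Mcg^\pm(\Sigma,C)$ compatibly with $\Phi$, and since $\Mcg^\pm(\Sigma,C)$ acts transitively on Farey triangles it suffices to verify the equivalence on a single normalised configuration and then propagate it by symmetry.

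Granting (a), parts (b) and (c) are immediate. The Farey complex is two-dimensional, so every vertex link of $\D$ is two-dimensional and hence $\dim\D=1+2=3$; moreover $\C(\Sigma)$ contains Farey triangles, so $\D$ contains tetrahedra, giving (b). For (c), let $\{\alpha_0,\alpha_1,\alpha_2\}$ be a triangle of $\D$. A tetrahedron containing it corresponds to a common neighbour $\alpha_3$, equivalently to a vertex of $\Lk_\D(\alpha_0)\cong\C(\Sigma)$ completing the edge $\{\Phi_{\alpha_0}(\alpha_1),\Phi_{\alpha_0}(\alpha_2)\}$ to a Farey triangle, where $\Phi_{\alpha_0}$ is the map above built from $\alpha_0$. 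Since every edge of the Farey complex lies in exactly two Farey triangles, the triangle $\{\alpha_0,\alpha_1,\alpha_2\}$ lies in exactly two tetrahedra, proving (c).
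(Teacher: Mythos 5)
Your proof is correct and follows essentially the same route as the paper's: the same map $\alpha'\mapsto\beta_{\alpha,\alpha'}$ identifying $\Lk(\alpha)$ in $\D$ with the Farey complex $\C(S\backslash\alpha)$, with (b) and (c) deduced from the dimension of the Farey complex and the fact that each Farey edge lies in exactly two triangles. The only difference is that the paper simply asserts the adjacency equivalence $i(\alpha',\alpha'')=1\iff i(\beta_{\alpha,\alpha'},\beta_{\alpha,\alpha''})=2$ as an observation, whereas you flag it as the step requiring a computation and sketch a normalisation strategy for it.
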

\begin{proof}
Fix a vertex $\alpha\in\D$ and consider the four-holed sphere $S\backslash\alpha$.  Recall that $\C(S\backslash\alpha)$ is the Farey complex. We define a map $\theta_\alpha\colon\Lk(\alpha)\to\C(S\backslash\alpha)$, where $\Lk(\alpha)$ is the link of $\alpha$ in $\D$.
For a vertex $\alpha'\in\Lk(\alpha)$, $\theta_\alpha(\alpha')$ is the two-sided curve determined by $\alpha$ and $\alpha'$. It follows from Lemma \ref{Lem:3holedRP2} that $\theta_\alpha$ is a bijection on vertices, and we claim that it is a simplicial isomorphism. Indeed, note that for $\alpha',\alpha''\in\Lk(\alpha)$ we have $i(\alpha',\alpha'')=1\iff i(\theta_\alpha(\alpha'),\theta_\alpha(\alpha''))=2$. This proves (a). The other assertions are consequences of (a) and well-known properties of the Farey complex; namely $\dim\C(S\backslash\alpha)=2$ and  every edge of $\C(S\backslash\alpha)$ is contained in exactly two different triangles.
\end{proof}

Given a one-sided curve $\alpha_0$ we can construct infinitely many tetrahedra of $\D$ containing $\alpha_0$ as a vertex. Let $\{\beta_i\}_1^3$ be any Farey triangle of $\C(S\backslash\alpha_0)$ and, for $1\le i\le 3$, let $\alpha_i$ be the one-sided curve such that $\beta_i$ is determined by $\alpha_i$ and $\alpha_0$. Then
$\{\alpha_i\}_0^3$ is a tetrahedron of $\D$.

We define a ``dual'' graph $\T$ whose vertices are tetrahedra of $\D$. Two tetrahedra are connected by an edge in $\T$ if their intersection is a triangle. 
The following theorem follows from \cite[Theorem 3.1]{Sch}.
\begin{theorem}[Sharlemann]
$\T$ is a $4$-regular tree.
\end{theorem}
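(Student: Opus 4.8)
The plan is to verify the two defining properties of a $4$-regular tree separately: that every vertex of $\T$ has degree exactly $4$, and that $\T$ is connected and acyclic. The degree count is immediate from Proposition \ref{Prop:D}. A tetrahedron $T$ of $\D$ has exactly four triangular faces, and by part (c) each such face lies in exactly one tetrahedron other than $T$. These four neighbours are pairwise distinct: if two faces of $T$ were shared with the same tetrahedron $T'$, then $T'$ would contain the union of two faces of $T$, hence all four vertices of $T$, forcing $T'=T$. The same observation shows that two distinct tetrahedra meet in at most one triangle, so $\T$ has no multiple edges and each vertex has degree precisely $4$.

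For connectedness I would first note that the tetrahedra of $\D$ containing a fixed vertex $\alpha$ span a connected subgraph of $\T$. Indeed, under the isomorphism $\theta_\alpha$ of Proposition \ref{Prop:D}(a) these tetrahedra correspond to the Farey triangles of $\Lk(\alpha)\cong\C(S\backslash\alpha)$, and two of them are joined by an edge of $\T$ (through a triangle containing $\alpha$) precisely when the corresponding Farey triangles share an edge; the resulting graph is the trivalent tree dual to the Farey complex, which is connected. To pass from this local statement to global connectedness, I would use that $\C(S)$, and hence the $1$-skeleton $\D^1$, is connected. Given two tetrahedra $T,T'$, choose vertices $\alpha\in T$, $\alpha'\in T'$ and an edge-path $\alpha=v_0,\dots,v_m=\alpha'$ in $\D^1$. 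Each edge $\{v_i,v_{i+1}\}$ lies in some tetrahedron $T_i$ (this exists by Proposition \ref{Prop:D}(a), since $v_{i+1}$ is a vertex of the Farey complex $\Lk(v_i)$ and hence lies in a Farey triangle); consecutive tetrahedra $T_{i-1},T_i$ share the vertex $v_i$ and are therefore connected in $\T$ by the previous step, and the same applies to the pairs $T,T_0$ and $T_{m-1},T'$. Concatenating these paths connects $T$ to $T'$.

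The absence of cycles is the main obstacle, and here I would argue topologically. Realise $\T$ as the dual graph embedded in $|\D|$, with one vertex at the barycentre of each tetrahedron and one edge transverse to each triangle. Since $\D$ is pure of dimension three (every edge and every triangle lies in a tetrahedron, by the above and Proposition \ref{Prop:D}(c)), removing the $1$-skeleton $\D^1$, a subcomplex of codimension two, yields a space $M=|\D|\setminus\D^1$ onto which $\T$ deformation retracts; thus $\T$ is a tree if and only if $M$ is connected and simply connected. The key geometric input is that the link in $\D$ of an edge $\{\alpha,\alpha'\}$ is, via $\theta_\alpha$, isomorphic to the link of the vertex $\theta_\alpha(\alpha')$ of the Farey complex, which is a bi-infinite path; hence the transverse cone along each edge is contractible and carries no essential meridian. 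Consequently removing $\D^1$ does not change the fundamental group, so $\pi_1(M)\cong\pi_1(|\D|)$. The remaining point, that $|\D|$ is simply connected, is exactly the structural content of Scharlemann's description of $\C(N_{1,3})$ in \cite[Theorem 3.1]{Sch}; alternatively one can try to establish it directly from the clique-complex structure of $\D$, and I expect this to be the genuinely delicate step. Once simple connectedness of $|\D|$ is in hand, $M$, and therefore $\T$, is a connected simply connected graph, that is a tree, which together with the degree count completes the proof.
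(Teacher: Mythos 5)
The paper offers no argument here at all: the statement is simply attributed to \cite[Theorem 3.1]{Sch}, so any honest comparison is between your proposal and that citation. Your first two steps are genuine added value. The $4$-regularity argument from Proposition \ref{Prop:D}(c), including the observation that two distinct faces of $T$ already span all four vertices so the four neighbours are pairwise distinct, is correct and complete. The connectedness argument is also sound: tetrahedra through a fixed vertex $\alpha$ correspond under $\theta_\alpha$ to Farey triangles, two of them are $\T$-adjacent exactly when the Farey triangles share an edge (a shared triangle missing $\alpha$ would force the tetrahedra to coincide), and the dual graph of the Farey tessellation is the connected trivalent tree; chaining these local pieces along an edge-path in $\D^1$ works. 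Note, though, that you quietly import the connectedness of $\C(N_{1,3})$ (equivalently of $\D^1$), which the paper has not established at this point and which itself comes from Scharlemann.

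The gap is in the acyclicity step, and it is twofold. First, the topological reduction is only sketched: the deformation retraction of $|\D|\setminus\D^1$ onto the dual spine is asserted, and when you argue that deleting $\D^1$ does not change $\pi_1$ you treat only the links of edges (the bi-infinite path), not the links of vertices (the Farey complex, whose contractibility you would also need to invoke); both points are repairable but are real work. Second, and more importantly, the statement you reduce to --- simple connectedness of $|\D|$ --- is exactly the substance of the theorem, and you defer it back to \cite[Theorem 3.1]{Sch} or flag it as ``the genuinely delicate step'' without an argument. So the proposal does not prove that $\T$ is acyclic; it converts the claim into an equivalent one and cites the same source the paper does. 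As a self-contained proof it is incomplete; as an expansion of the paper's citation it correctly isolates where the content lies and independently establishes the regularity and connectedness assertions.
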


\begin{lemma}\label{L:Erigid} 
For any two tetrahedra $\{\alpha_i\}_{i=0}^3$ and $\{\alpha'_i\}_{i=0}^3$ of $\D$ there exists a unique $f\in\Mcg(S)$ such that
$f(\alpha_i)=\alpha'_i$ for $0\le i\le  3$.
\end{lemma}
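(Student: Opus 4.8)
The plan is to reduce the statement to the Farey rigidity of Lemma~\ref{L:Farey_rigid} via the isomorphism $\theta_{\alpha_0}$ of Proposition~\ref{Prop:D}(a). First I would invoke the change of coordinates principle: since $S\backslash\alpha$ is a $4$-holed sphere for every one-sided curve $\alpha$ (the complement always has the same topological type), $\Mcg(S)$ acts transitively on the set $V_1$ of one-sided vertices. Hence there is $g\in\Mcg(S)$ with $g(\alpha_0)=\alpha_0'$, and after replacing $\{\alpha_i\}_{0}^{3}$ by $\{g(\alpha_i)\}_{0}^{3}$ I may assume $\alpha_0=\alpha_0'$. From now on I work with the four-holed sphere $S\backslash\alpha_0$, whose cutting produces a distinguished boundary component $C=\bdr{N(\alpha_0)}$, and with the simplicial isomorphism $\theta:=\theta_{\alpha_0}\colon\Lk(\alpha_0)\to\C(S\backslash\alpha_0)$ onto the Farey complex. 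Under $\theta$ the triangles $\{\alpha_1,\alpha_2,\alpha_3\}$ and $\{\alpha_1',\alpha_2',\alpha_3'\}$ become Farey triangles $\{\beta_i\}$ and $\{\beta_i'\}$, where $\beta_i$ is the two-sided curve determined by $\alpha_0$ and $\alpha_i$ (Lemma~\ref{Lem:3holedRP2}).

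For existence, I apply Lemma~\ref{L:Farey_rigid} to obtain the unique $\bar f\in\Mcg^\pm(S\backslash\alpha_0,C)$ with $\bar f(\beta_i)=\beta_i'$ for $i=1,2,3$. Since $\bar f$ fixes $C$, it extends over the crosscap glued along $C$ to a homeomorphism $f\in\Mcg(S)$ fixing $\alpha_0$ and restricting to $\bar f$ on $S\backslash\alpha_0$; both orientation classes of $\bar f$ extend, because $S$ is nonorientable. The map $\theta$ is natural with respect to the stabilizer of $\alpha_0$: for $h\in\Mcg(S)$ with $h(\alpha_0)=\alpha_0$ one has $\theta(h(\alpha'))=\bar h(\theta(\alpha'))$, since being ``determined by $\alpha_0$ and $\alpha'$'' is a topological condition preserved by $h$. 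Applying this to $f$ gives $\theta(f(\alpha_i))=\bar f(\beta_i)=\beta_i'=\theta(\alpha_i')$, and injectivity of $\theta$ yields $f(\alpha_i)=\alpha_i'$ for all $i$. The composition $f\circ g$ then maps the original tetrahedron to $\{\alpha_i'\}_0^3$ indexwise.

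For uniqueness, suppose $f_1,f_2\in\Mcg(S)$ both carry $\{\alpha_i\}_{0}^{3}$ to $\{\alpha_i'\}_{0}^{3}$ indexwise, and set $h=f_2^{-1}f_1$, so $h(\alpha_i)=\alpha_i$ for all $i$. Then $h$ fixes $\alpha_0$, and its restriction $\bar h\in\Mcg^\pm(S\backslash\alpha_0,C)$ fixes each $\beta_i$ by naturality of $\theta$. The uniqueness clause of Lemma~\ref{L:Farey_rigid} forces $\bar h=\mathrm{id}$, so $h$ is supported, up to isotopy, in the M\"obius band neighbourhood $N(\alpha_0)$; that is, $h$ lies in the image of $\Mcg(N(\alpha_0),\bdr{N(\alpha_0)})$.

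The main obstacle is this final step: one must know that the mapping class group of the M\"obius band, relative to its boundary, is trivial, so that $h=\mathrm{id}$. This is a classical fact (going back to Epstein), and I would either cite it or verify it by passing to the orientation double cover, which is an annulus $A$ on which the deck involution $\iota$ is orientation-reversing and interchanges the two boundary circles. A lift of $h$ is an $\iota$-equivariant mapping class fixing $\bdr A$ pointwise, hence equal to some $T^n\in\Mcg(A,\bdr A)\cong\Z$; equivariance gives $\iota T^n\iota^{-1}=T^{-n}=T^n$, so $n=0$. Thus the M\"obius band contributes nothing, $h=\mathrm{id}$, and $f_1=f_2$, which together with the existence argument completes the proof.
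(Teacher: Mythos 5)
Your proof is correct and follows essentially the same route as the paper: cut along $\alpha_0$, translate the tetrahedra into Farey triangles of the cut four-holed sphere via the determined two-sided curves, and invoke Lemma \ref{L:Farey_rigid}. You simply make explicit several steps the paper leaves implicit, namely transitivity of $\Mcg(S)$ on one-sided curves, the extension of $\bar f$ over the crosscap, and the triviality of the mapping class group of the M\"obius band rel boundary needed for uniqueness.
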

\begin{proof}
For $1\le i\le 3$ let $\beta_i$ (resp. $\beta'_i$) be the two-sided curve determined by $\alpha_0$ and $\alpha_i$ (resp. $\alpha'_0$ and $\alpha'_i$). Note that $\{\beta_1,\beta_2,\beta_3\}$ and $\{\beta'_1,\beta'_2,\beta'_3\}$ are Farey triangles in $\C(S\backslash\alpha_0)$ and $\C(S\backslash\alpha'_0)$ respectively. By Lemma \ref{L:Farey_rigid} there exists a unique $f\in\Mcg(S)$ such that $f(\alpha_0)=\alpha'_0$ and $f(\beta_i)=\beta'_i$ for $1\le i\le 3$. Since $\alpha'_i$ is the unique vertex of $\C(S)$ different from $\alpha'_0$ and adjacent to $\beta'_i$, we have $f(\alpha_i)=\alpha'_i$ for $1\le i\le 3$.
\end{proof}

\begin{figure}[hbt]
 \begin{center}
 \includegraphics{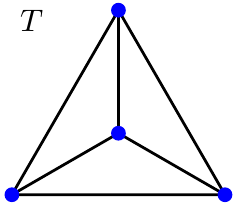} \hskip 1cm \includegraphics{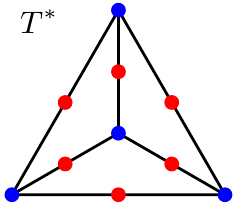}
\caption {A tetrahedron of $\D$ and the corresponding subgraph of $\C(S)$.}\label{Fig:TT}
\end{center}
\end{figure}

Let $T$ be a tetrahedron of $\D$. We denote by $T^\ast$ the full subcomplex of $\C(S)$ spanned by the four vertices of $T$ and the six two-sided vertices determined by the edges of $T$ (Figure \ref{Fig:TT}). The following proposition says that $T^\ast$ is rigid. It is thus an extension of the main result of \cite{IK}.

\begin{prop}\label{P:Trigid}
Suppose that $T$ is a tetrahedron $\D$ and $\phi\colon T^\ast\to\C(S)$ is a locally injective simplicial map. Then there exists a unique $f\in\Mcg(S)$ such that $\phi=f$ on $T^\ast$.
\end{prop}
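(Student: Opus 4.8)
The plan is to first show that any locally injective simplicial map $\phi\colon T^\ast\to\C(S)$ must respect the partition $\C^0(S)=V_1\sqcup V_2$ into one-sided and two-sided vertices, then to read off a tetrahedron of $\D$ from the images of the vertices of $T$, so that the rigidity already established in Lemma \ref{L:Erigid} can be invoked.

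Write $T=\{\alpha_0,\alpha_1,\alpha_2,\alpha_3\}$ and let $\beta_{ij}$ (for $0\le i<j\le 3$) be the two-sided vertex determined by $\alpha_i$ and $\alpha_j$, so that the ten vertices of $T^\ast$ are the $\alpha_i$ and the $\beta_{ij}$, the only edges being $\{\alpha_i,\beta_{ij}\}$ and $\{\alpha_j,\beta_{ij}\}$. In particular each $\alpha_i$ has exactly three neighbours in $T^\ast$, while each $\beta_{ij}$ has exactly two. First I would argue that $\alpha_i':=\phi(\alpha_i)$ is one-sided for every $i$. Indeed, local injectivity forces the three images $\phi(\beta_{ij})$ (for $j\ne i$) to be pairwise distinct, and, $\phi$ being simplicial, each is adjacent to $\alpha_i'$; hence $\alpha_i'$ has at least three neighbours in $\C(S)$. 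Since by Lemma \ref{Lem:3holedRP2} every two-sided vertex has exactly two neighbours, $\alpha_i'$ cannot be two-sided. As every edge of $\C(S)$ joins a one-sided vertex to a two-sided one, it follows that $\beta_{ij}':=\phi(\beta_{ij})$ is two-sided for all $i,j$, so $\phi$ preserves the bipartition.

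Next I would identify the tetrahedron of $\D$. For each pair $i<j$ the two-sided vertex $\beta_{ij}'$ is adjacent to both $\alpha_i'$ and $\alpha_j'$, which are distinct by local injectivity at $\beta_{ij}$. By Lemma \ref{Lem:3holedRP2} a two-sided vertex is adjacent to exactly two one-sided vertices, namely the pair that determines it; hence $\beta_{ij}'$ is determined by $\alpha_i'$ and $\alpha_j'$ and, in particular, $i(\alpha_i',\alpha_j')=1$. Thus $\{\alpha_0',\alpha_1',\alpha_2',\alpha_3'\}$ is a collection of one-sided curves pairwise intersecting once, i.e.\ a tetrahedron $T'$ of $\D$. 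Lemma \ref{L:Erigid} now yields a unique $f\in\Mcg(S)$ with $f(\alpha_i)=\alpha_i'$ for $0\le i\le 3$.

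Finally I would check that $f$ and $\phi$ agree on the whole of $T^\ast$. On the one-sided vertices this holds by construction. For a two-sided vertex, $f$ is a homeomorphism, so $f(\beta_{ij})$ is the two-sided curve determined by $f(\alpha_i)=\alpha_i'$ and $f(\alpha_j)=\alpha_j'$; by the uniqueness clause of Lemma \ref{Lem:3holedRP2} this is precisely $\beta_{ij}'=\phi(\beta_{ij})$. Hence $\phi=f$ on $T^\ast$, and uniqueness of $f$ follows from the uniqueness in Lemma \ref{L:Erigid}, since any homeomorphism agreeing with $\phi$ on $T^\ast$ agrees with $f$ on the vertices of $T$. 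The one genuinely delicate point is the first step: once $\phi$ is known to preserve one-sidedness, everything reduces to the already-proven rigidity of tetrahedra of $\D$, so the crux is the degree argument distinguishing $V_1$ from $V_2$.
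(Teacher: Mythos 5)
Your proposal is correct and follows essentially the same route as the paper: a degree count (two-sided vertices of $\C(S)$ have valence $2$) shows the images of the $\alpha_i$ are one-sided, the images of the $\beta_{ij}$ then exhibit $\{\phi(\alpha_i)\}$ as a tetrahedron of $\D$, and Lemma \ref{L:Erigid} together with the uniqueness clause of Lemma \ref{Lem:3holedRP2} finishes the argument. The only cosmetic difference is that the paper first records global injectivity of $\phi$ from the diameter-$2$ observation and phrases the adjacency step via distances in $\C(S)$, whereas you argue directly through the vertices $\beta_{ij}'$; both amount to the same thing.
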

\begin{proof}
First note that $\phi$ is injective because it is locally injective and 
 $T^\ast$ has diameter $2$.
Let $T=\{\alpha_i\}_{i=0}^3$. We claim that $\{\phi(\alpha_i)\}_{i=0}^3$ is a tetrahedron of $\D$. Indeed, for $1\le i\le 3$, $\phi(\alpha_i)$ is adjacent in $\C(S)$ to three different vertices, and hence it is one-sided, as two-sided vertices of $\C(S)$ have degree $2$. For $i\ne j$, the distance in $\C(S)$ between $\phi(\alpha_i)$ and $\phi(\alpha_j)$ is $2$, and hence $\phi(\alpha_i)$ and $\phi(\alpha_j)$ are adjacent in $\D$.

By Lemma \ref{L:Erigid} there exists a unique $f\in\Mcg(S)$ such that
$f(\alpha_i)=\phi(\alpha_i)$ for $0\le i\le  3$. Let $\beta$ be a two-sided 
vertex of $T^\ast$ determined by $\alpha_i$ and $\alpha_j$. Then $\phi(\beta)$ is adjacent to $\phi(\alpha_i)$ and $\phi(\alpha_j)$, and since such a curve is unique, $\phi(\beta)=f(\beta)$.
\end{proof}

We denote by $\T^0$ the vertex set of $\T$, that is the set of tetrahedra of $\D$.
Let $d_{\T}$ denote the path metric on $\T$.
We fix a reference tetrahedron $T_0$ 
and define
\[\T^0_n=\{T\in\T^0 \mid d_\T(T,T_0)\le n\}.\]
In other words, $\T^0_n$ is the set of tetrahedra within distance at most $n$ from $T_0$ in the path metric on $\T$.

\medskip

\begin{proof}[Proof of Theorem \ref{T:main}.]
 Let $\cY_1=T_0^\ast$ and for $n\ge 1$:
 \[\cY_{n+1}=\bigcup_{T\in\T^0_n}T^\ast.\]
 We prove by induction that $\cY_n$ is rigid for all $n\ge 1$.
 By Proposition \ref{P:Trigid} $\cY_1$ is rigid. Assume that $\cY_n$ is rigid and let $\phi\colon\cY_{n+1}\to\C(S)$ be a locally injective simplicial map.
 Since $\cY_n$ is rigid, there exists a unique $f\in\Mcg(S)$ such that $f=\phi$ on $\cY_n$. Let $\phi'=f^{-1}\circ\phi$.

Let $T\in\T_{n+1}\backslash\T_n$. We need to show that $\phi'$ fixes every vertex of $T^\ast$. It suffices to show that $\phi'$ fixes every vertex of $T$ because then it also has to fix the two-sided vertices of $T^\ast$ determined by edges of $T$. The tetrahedron $T$ has a common face with some (unique) tetrahedron $T'\in\T_n$. Let $T=\{\alpha_0,\alpha_1,\alpha_2,\alpha_3\}$ and 
$T'=\{\alpha'_0,\alpha_1,\alpha_2,\alpha_3\}$.  Let $\beta$ (resp. $\beta'$) be the two-sided vertex of $T^\ast$ (resp. $(T')^\ast$) determined by $\alpha_0$ and $\alpha_1$ (resp.  $\alpha'_0$ and $\alpha_1$). By local injectivity of $\phi'$, $\phi'(\beta)\ne\phi'(\beta')=\beta'$, and hence also
$\phi'(\alpha_0)\ne\phi'(\alpha'_0)=\alpha'_0$. By Proposition  \ref{P:Trigid}, 
$\phi'(T)$ is  a tetrahedron different from $T'$ and having a common face with $T'$. Since such a tetrahedron is unique by (c) of Proposition \ref{Prop:D}, $\phi'(T)=T$ and $\phi'(\alpha_0)=\alpha_0$.
We have shown that $\phi'$ pointwise fixes $T^\ast$, and it follows that it pointwise fixes $\cY_{n+1}$. Hence $\phi=f$ on $\cY_{n+1}$. 

Since $\cY_n$ contains $T_0^\ast$ for all $n\ge 1$, it has trivial pointwise stabilizer in $\Mcg(S)$. Finally, it follows from the connectedness of $\T$ that $\bigcup_{n\ge 1}\cY_n=\C(S)$.
\end{proof}

\section{Coarse geometry}
In this section we consider $\C(S)$ and $\D^1$ as metric graphs with all edges of length $1$. We denote the metrics on these graphs by $d_\C$ and $d_\D$ respectively.

There is a natural piecewise-linear homeomorphism $\phi\colon\C(S)\to\D^1$ equal to the identity on one-sided vertices and which forgets the two-sided vertices. That is, if $\beta$ is the two-sided vertex of $\C(S)$ determined by $\alpha$ and $\alpha'$, then $\phi(\beta)$ is defined to be the midpoint of the edge of $\D$ connecting $\alpha$ and $\alpha'$. We have 
\[ d_\C(x,y)=2d_\D(\phi(x),\phi(y))\] for all $x,y\in\C(S)$. In particular, $\phi$ is a quasi-isometry.

Since $\mathcal{T}$ is a tree, every triangle of $\D$ is separating, i.e. the space obtained by removing a triangle from $\D$ has two connected components.
If $\Delta$ is a triangle of $\D$, and $x$ and $y$ are points lying in different connected components of $\D\backslash\Delta$, then we say that $\Delta$ separates $x$ from $y$.

\begin{lemma}\label{Lem:bottle}
Let $p$ be a vertex on a geodesic in $\D^1$ from  $x$ to $y$, such that $d_\D(p,x)\ge 1$ and $d_\D(p,y)\ge 1$. There exists a triangle $\Delta$ of $\D$ such that $p\in\Delta$ and $\Delta$ separates $x$ from $y$. 
\end{lemma}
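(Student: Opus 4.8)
The plan is to locate $\Delta$ by analysing the link of $p$, which is a Farey complex by Proposition \ref{Prop:D}(a), and then to match the combinatorial separation inside that link with the separation of $\D$ governed by the tree $\T$.

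First I would fix the geodesic neighbours of $p$. Since $d_\D(p,x)\ge 1$ and $d_\D(p,y)\ge 1$, the geodesic enters $p$ along an edge $pa$ and leaves along an edge $pb$, where $a,b$ are the vertices of $\D$ immediately preceding and following $p$ on the path. As $apb$ is a geodesic subpath of length $2$, the vertices $a,b$ are distinct and non-adjacent in $\D$, hence distinct non-adjacent vertices of $\Lk(p)$. The key elementary observation, which I would isolate, is that $a$ and $b$ are the \emph{only} vertices on the geodesic adjacent to $p$: any other such vertex $w$ satisfies $d_\D(w,p)=1$, while its distance measured along the path is at least $2$ (it is separated from $p$ by $a$ or $b$), contradicting minimality.

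Next I would choose the separating triangle inside the link. Since $\Lk(p)$ is a flag triangulation of a disc and $a,b$ are distinct non-adjacent vertices, there is an edge $\{u,v\}$ of $\Lk(p)$ separating $a$ from $b$ with $u,v\notin\{a,b\}$; concretely I would read it off from the tree dual to $\Lk(p)$, that is, from the subtree of $\T$ of tetrahedra containing $p$, by taking an edge on the bridge joining the line of triangles through $a$ to the line of triangles through $b$ (this choice automatically forces $u,v\notin\{a,b\}$). Set $\Delta=\{p,u,v\}$. Combining this with the observation above, the geodesic meets the closed triangle $\Delta$ only at $p$: it avoids $u$ and $v$, being neighbours of $p$ other than $a,b$, and therefore neither passes through them nor runs along $uv,pu,pv$.

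Finally I would promote the separation in $\Lk(p)$ to separation in $\D$ via $\T$. The two tetrahedra containing $\Delta$ both contain $p$ (Proposition \ref{Prop:D}(c)), so the edge $e$ of $\T$ dual to $\Delta$ lies in the subtree of tetrahedra through $p$; removing it splits $\T$, hence $\D$, into two components $U_1,U_2$. The tetrahedra containing the edge $pa$ are exactly those through the vertex $a$ of $\Lk(p)$; they form a connected line lying entirely on one side of $e$, and those containing $pb$ lie on the other, precisely because $\{u,v\}$ separates $a$ from $b$. Thus points of $pa$ near $p$ lie in $U_1$ and points of $pb$ near $p$ lie in $U_2$. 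Since the subpath from $x$ to $a$ followed by a short final segment of $pa$ avoids $\Delta$, we conclude $x\in U_1$, and symmetrically $y\in U_2$, so $\Delta$ separates $x$ from $y$. I expect the main obstacle to be this middle step: pinning down the existence of the separating edge $\{u,v\}$ disjoint from $\{a,b\}$, which is exactly what lets the geodesic avoid $\Delta$ away from $p$ and what aligns the separation in the link with the two-component splitting coming from the tree $\T$.
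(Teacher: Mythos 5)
Your proof is correct, but it constructs the separating triangle by a genuinely different route from the paper. The paper works from the vertex $q$ immediately preceding $p$ on the geodesic: it takes a geodesic in the tree $\T$ from a tetrahedron containing $q$ to one containing $y$, lets $T_i$ be the first tetrahedron in that sequence not containing $q$, and sets $\Delta=T_i\cap T_{i-1}$. Since $T_{i-1}=\{q\}\cup\Delta$, every vertex of $\Delta$ is adjacent to $q$, so the point where the geodesic $[q,y]$ crosses $\Delta$ must be the very next vertex $p$; the separation of $x$ from $y$ is then obtained by a shortcut argument (if $x$ and $y$ were on the same side of $\Delta$, one could bypass $q$ along an edge of $\Delta$ and shorten $[x,y]$). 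You instead argue locally in $\Lk(p)$, using that it is a Farey complex whose dual tree is the subtree of $\T$ of tetrahedra through $p$: you choose an edge $\{u,v\}$ on the bridge between the lines of triangles through $a$ and through $b$, and cone with $p$. Your construction buys a little more: since $u,v\notin\{a,b\}$ and $a,b$ are the only geodesic vertices adjacent to $p$, the geodesic meets the closed triangle $\Delta$ only at $p$, so the separation of $x$ from $y$ can be verified directly by placing each half of the geodesic in one component of $\T$ minus the dual edge, rather than by contradiction. The cost is the extra link/dual-tree analysis needed to produce $\{u,v\}$ disjoint from $\{a,b\}$, which the paper's shorter argument avoids. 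Both proofs ultimately rest on the same two ingredients: Scharlemann's theorem that $\T$ is a tree, and the resulting fact that every triangle of $\D$ separates.
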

\begin{proof}
Let $[x,y]$ be a geodesic in $\D^1$ from $x$ to $y$ containing $p$, and let $q$ be the vertex preceding $p$ on $[x,y]$. 
Let $(T_i)_0^n$ be any sequence of tetrahedra forming a geodesic in $\T$ such that  $q\in T_0$ and $y\in T_n$.  Note that $q\notin T_n$ since $d_\D(q,y)=1+d_\D(p,y)\ge 2$. Let $T_i$ be the first tetrahedron in this sequence such that $q\notin T_i$. Then  $\Delta=T_i\cap T_{i-1}$ is a triangle separating $q$ from $y$. The segment $[q,y]$ must pass through a vertex of  $\Delta$, which gives $p\in\Delta$. Finally notice that $\Delta$ separates $x$ from $y$, for otherwise $[x,y]$ could not contain $q$ (there would be shorter path from $x$ to $y$ avoiding $q$).
\end{proof}

\begin{theorem}
The curve graph $\C(S)$ is quasi-isometric to a simplicial tree.
\end{theorem}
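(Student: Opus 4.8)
The plan is to transport the problem to the auxiliary graph $\D^1$ and then apply Manning's bottleneck criterion, which characterizes the geodesic metric spaces that are quasi-isometric to simplicial trees: such a space has the property that there is a constant $\delta$ so that for every pair of points $x,y$ some midpoint $m$ of a geodesic $[x,y]$ meets every path from $x$ to $y$ within distance $\delta$. Since the homeomorphism $\phi\colon\C(S)\to\D^1$ was shown to be a quasi-isometry (indeed $d_\C=2d_\D\circ\phi$), and since being quasi-isometric to a tree is a quasi-isometry invariant, it suffices to prove that $\D^1$ is quasi-isometric to a tree. As $\D^1$ is a connected metric graph, it is geodesic, so Manning's criterion applies; the whole point of Lemma \ref{Lem:bottle} is to furnish exactly the separating triangles needed to verify it.

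To carry this out I would fix $x,y\in\D^1$, a geodesic $[x,y]$, and its midpoint $m$. If $d_\D(x,y)\le 4$, then $d_\D(x,m)=\tfrac12 d_\D(x,y)\le 2$, and every path from $x$ to $y$ passes through $x$ itself, so the ball $B(m,2)$ is met trivially. Otherwise $d_\D(x,m)=d_\D(y,m)>2$, and I would choose a vertex $p$ on $[x,y]$ with $d_\D(p,m)\le 1$ (for instance an endpoint of the edge of $[x,y]$ containing $m$). Then $d_\D(p,x)\ge d_\D(m,x)-1>1$ and likewise $d_\D(p,y)>1$, so Lemma \ref{Lem:bottle} produces a triangle $\Delta$ of $\D$ with $p\in\Delta$ that separates $x$ from $y$.

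The final step converts this separation into a bottleneck. Because $\Delta$ separates $x$ from $y$ in $\D$, any path from $x$ to $y$ — in particular any path lying in $\D^1$ — must meet $\Delta$; as such a path is contained in $\D^1$, it meets $\Delta\cap\D^1=\partial\Delta$, the union of the three edges of $\Delta$. The three vertices of $\Delta$ are pairwise adjacent, so every point of $\partial\Delta$ lies within distance $\tfrac32$ of the vertex $p$, hence within distance $\tfrac32+1=\tfrac52$ of $m$. Thus every path from $x$ to $y$ meets $B(m,\tfrac52)$, and the bottleneck property holds with $\delta=\tfrac52$; by Manning's criterion $\D^1$, and therefore $\C(S)$, is quasi-isometric to a simplicial tree.

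Most of the substance is already packaged in Lemma \ref{Lem:bottle} together with the tree structure of $\T$ coming from (c) of Proposition \ref{Prop:D}, so I expect the application itself to be short. The one genuinely delicate point, and the step I would write most carefully, is the interplay between the $2$-complex $\D$, in which separation is formulated, and the $1$-skeleton $\D^1$, which carries the metric and the paths: I must be sure that a path in $\D^1$ that is forced across the closed triangle $\Delta$ actually crosses its $1$-skeleton $\partial\Delta$ at a point close to $p$, and that the resulting constant can be chosen uniformly over all pairs, including the short-distance case handled separately above.
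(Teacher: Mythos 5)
Your proposal is correct and follows essentially the same route as the paper: reduce to $\D^1$ via the quasi-isometry $\phi$, invoke Manning's bottleneck criterion, and use Lemma \ref{Lem:bottle} to produce a separating triangle through a vertex near the midpoint. The only differences are cosmetic (a slightly larger bottleneck constant and a more explicit treatment of the short-distance case and of where a path must meet $\Delta$), neither of which affects the argument.
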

\begin{proof}
Since $\C(S)$ is quasi-isometric to $\D^1$, it suffices to show that $\D^1$ is 
quasi-isometric to a simplicial tree. By \cite[Throem 4.6]{Manning}, this is equivalent to $\D^1$ satisfying the following Bottleneck Property:
There is some $L>0$ so that for all $x,y\in\D^1$ there is a midpoint $m=m(x,y)$ with $d(x,m)=d(y,m)=\frac{1}{2}d(x,y)$ and the property that any path from $x$ to $y$ must pass within less than $L$ of the point $m$.

Let $L>\frac{3}{2}$ and define $m=m(x,y)$ to be the midpoint of any geodesic from $x$ to $y$. Clearly we can assume $d_\D(x,m)\ge L$. Let $p$ be a vertex on a geodesic from $x$ to $y$ such that $d_\D(m,p)\le \frac{1}{2}$. By Lemma \ref{Lem:bottle}, there exists a triangle $\Delta$ separating $x$ from $y$ such that $p\in \Delta$. Any path from $x$ to $y$ must pass through $\Delta$, and hence within at most $\frac{3}{2}$ of the point $m$.
\end{proof}
By using the Bottleneck Property from the proof above we can determine the hyperbolicity constant of $\C(S)$.
Recall that a geodesic metric space  $(X,d)$ is $\delta$-hyperbolic if, for any geodesic triangle $[x,y]\cup[x,z]\cup[y,z]$ and any $p\in[x,y]$ there exists some $q\in[x,z]\cup[y,z]$ with $d(p,q)\le\delta$. 
A triangle satisfying the condition above is called $\delta$-thin.
Masur and Minsky \cite{Mas-Min} proved that $\C(S)$ is hyperbolic for orientable $S$. Their result was extended to nonorientable surfaces by Bestvina-Fujiwara \cite{Best-Fuji} and Masur-Schleimer \cite{Mas-Sch}.
\begin{prop}
The graph $\C(S)$ is $3$-hyperbolic.
\end{prop}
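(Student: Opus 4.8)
The plan is to transfer the problem to $\D^1$ by means of the quasi-isometry $\phi$ and then exploit the tree structure of $\T$ through Lemma \ref{Lem:bottle}. Since $d_\C(x,y)=2d_\D(\phi(x),\phi(y))$ for all $x,y$, the map $\phi$ rescales every distance by the factor $\tfrac12$, so it carries geodesics to geodesics bijectively; it therefore suffices to bound, in $\D^1$, how far a point on one side of a geodesic triangle can lie from the union of the other two sides, and then multiply the resulting bound by $2$ to obtain the hyperbolicity constant of $\C(S)$.

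The core estimate is a thinness statement at the level of one-sided vertices. Given a geodesic triangle $[\tilde x,\tilde y]\cup[\tilde x,\tilde z]\cup[\tilde y,\tilde z]$ in $\D^1$ and a vertex $\tilde u$ on $[\tilde x,\tilde y]$ with $d_\D(\tilde u,\tilde x)\ge 1$ and $d_\D(\tilde u,\tilde y)\ge 1$, Lemma \ref{Lem:bottle} furnishes a triangle $\Delta$ of $\D$ with $\tilde u\in\Delta$ that separates $\tilde x$ from $\tilde y$. The third vertex $\tilde z$ lies in one of the two components of $\D\setminus\Delta$ (or on $\Delta$ itself), so $\Delta$ separates $\tilde z$ from $\tilde x$ or from $\tilde y$; consequently the corresponding side $[\tilde x,\tilde z]$ or $[\tilde y,\tilde z]$ must cross $\Delta$, and, exactly as in the proof of Lemma \ref{Lem:bottle}, such a crossing geodesic has to pass through a vertex $v$ of $\Delta$. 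As any two vertices of the simplex $\Delta$ are at distance at most $1$ in $\D^1$, this gives $d_\D(\tilde u,v)\le 1$ with $v$ lying on $[\tilde x,\tilde z]\cup[\tilde y,\tilde z]$.

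To deduce the bound $3$ for an arbitrary point $p$ on a side of a geodesic triangle in $\C(S)$, I would map the triangle into $\D^1$ and choose a one-sided vertex $u$ on the side containing $p$ that is nearest to $p$; because $\C(S)$ is bipartite and its two-sided vertices have degree $2$, every point of a geodesic is within $d_\C\le 1$ of such a one-sided vertex. If $\phi(u)$ is far from both endpoints in the sense above, the core estimate produces a vertex $v$ on another side with $d_\C(u,v)=2d_\D(\phi(u),v)\le 2$, whence $d_\C(p,v)\le 1+2=3$. If instead $\phi(u)$ lies at distance less than $1$ from an endpoint $\phi(x)$, a short case check (according as the triangle vertex $x$ is one-sided or two-sided) shows $d_\C(u,x)\le 1$, so $d_\C(p,x)\le 2$ while $x$ itself lies on the two remaining sides.

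The main difficulty here is not conceptual — the tree structure of $\T$ and Lemma \ref{Lem:bottle} already do the geometric work — but rather the careful bookkeeping of the constant through the $\tfrac12$-rescaling and, above all, the treatment of the near-endpoint cases, which is precisely where the sharp constant $3$ (rather than a larger one) is forced. I would also make explicit the two facts used implicitly above: that a path in $\D^1$ joining points in distinct components of $\D\setminus\Delta$ must pass through a vertex of $\Delta$ (an edge joining the two components would contradict separation), and that any two vertices of a $2$-simplex of $\D$ are at distance at most $1$ in $d_\D$.
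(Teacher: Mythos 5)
Your proposal follows essentially the same route as the paper: both reduce to $\D^1$ via the distance-doubling homeomorphism $\phi$, apply Lemma \ref{Lem:bottle} to a vertex of the geodesic near the given point to obtain a separating triangle $\Delta$ of $\D$, and observe that the union of the other two sides must cross $\Delta$, yielding the constant $\frac{3}{2}$ in $\D^1$ and hence $3$ in $\C(S)$. Your insistence that the crossing happen at a \emph{vertex} of $\Delta$ is, if anything, slightly more careful than the paper's ``any point $q$ in this intersection,'' but the decomposition, the key lemma, and the bookkeeping of the constant are the same.
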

\begin{proof}
First we prove that $\D^1$ is $\frac{3}{2}$-hyperbolic. Let $[x,y]\cup[x,z]\cup[y,z]$ be geodesic triangle in $\D^1$ and $p\in[x,y]$. Clearly we can assume $d_\D(x,p)\ge\frac{3}{2}$. Let $p'$ be a vertex on $[x,y]$ such that $d_\D(p,p')\le \frac{1}{2}$. By Lemma \ref{Lem:bottle}, there exists a triangle $\Delta$ separating $x$ from $y$ such that $p'\in \Delta$. 
It follows that $[x,z]\cup[y,z]$ has non-empty intersection with $\Delta$, and for any point $q$ in this intersection $d_\D(q,p)\le\frac{3}{2}$. 

To finish the proof we use the homeomorphism $\phi\colon\C(S)\to\D^1$. Observe that $\phi$ maps geodesics triangles to geodesic triangles and $d_\C(x,y)=2d_\D(\phi(x),\phi(y))$ for all $x,y\in\C(S)$. Since geodesic triangles in $\D^1$ are $\frac{3}{2}$-thin, geodesic triangles in $\C(S)$ are $3$-thin.
\end{proof}


\begin{thebibliography}{99}
%
\bibitem{AL1} J. Aramayona and C.J. Leininger, Finite rigid sets in curve complexes, J. Topol. Anal. 5 (2013), 183--203.
%
\bibitem{AL2} J. Aramayona and C.J. Leininger, Exhausting curve complexes by finite rigid sets, Pacific J. Math. 282 (2016), 257--283.
%
%
\bibitem{AK} F. Atalan and M. Korkmaz, Automorphisms of curve complexes on nonorientable surfaces, Groups Geom. Dyn. 8 (2014), 39--68.
%
%
\bibitem{Best-Fuji} M. Bestvina and K. Fujiwara, Quasi-homomorphisms on mapping class groups, {Glas. Mat. Ser. III} 42 (2007), 213--236.
%
%
\bibitem{FM} B. Farb and D. Margalit, A primer on mapping class groups.
Princeton Mathematical Series {49}. {Princeton University Press, Princeton,} 2012. 
%
%
\bibitem{Harvey} W. J. Harvey, Boundary structure of the modular group, in: Riemann
surfaces and related topics: Proc. 1978 Stony Brook Conf., Ann.
Math. Stud. 97, Princeton University Press (1981), 245--251.
%
\bibitem{IK}  S. Ilbira and M. Korkmaz, Finite rigid sets in curve complexes of non-orientable surfaces, arXiv:1810.07964. 
%
\bibitem{Irmak14} E. Irmak, On simplicial maps of the complexes of curves of nonorientable surfaces, Algebr. Geom. Topol. 14 (2014), 1153--1180.
%
\bibitem{Irmak19} E. Irmak, Exhausting curve complexes by finite rigid sets on nonorientable surfaces,  arXiv:1906.09913.
%
\bibitem{Ivanov-Aut} N. Ivanov, Automorphisms of complexes of curves and of Teichmuller spaces, {Int. Math. Res. Notices} {14} (1997), 651-666.
%
\bibitem{Kork-CC} M. Korkmaz, Automorphisms of complexes of curves on punctured spheres and on punctured tori, {Topology Appl.} {95} (1999), 85--111.
%
%
%
\bibitem{Luo} F. Luo, Automorphisms of the complex of curves, {Topology} {39} (2002), 283--298.
%
\bibitem{Manning} J. Manning, Geometry of pseudocharacters, Geom. Topol. 9 (2005) 1147-1185.
%
\bibitem{Mas-Min} H. A. Masur and Y. N. Minsky, Geometry of the complex of
curves I: Hyprebolicty, {Invent. Math.} {138}~(1) (1999) 103--149.
%
\bibitem{Mas-Sch} H. A. Masur and S. Schleimer, The geometry of the disk complex, {J. Amer. Math. Soc.} 26 (2013),  1--62.
%
\bibitem{Sch} M. Scharlemann, The complex of curves of a nonorientable surface, J.  London Math. Soc. 25 (1982), 171-184.
%
\bibitem{Shack} K. Shackleton, Combinatorial rigidity in curve complexes and mapping class groups, Pacific J. Math. 230 (2007), 217--232.
%
\end{thebibliography}
\end{document}